\documentclass[12pt, reqno]{amsart}
\usepackage{ amsmath,amsthm, amscd, amsfonts, amssymb, graphicx, color}
\usepackage[bookmarksnumbered, colorlinks, plainpages]{hyperref}
\textwidth 12 cm \textheight 18 cm

\oddsidemargin 2.12cm \evensidemargin 1.8cm

\setcounter{page}{1}

%------------------------------------------------------------------------------------%

\newtheorem{thm}{Theorem}[section]
\newtheorem{cor}[thm]{Corollary}
\newtheorem{lem}[thm]{Lemma}

\newtheorem{exam}[thm]{Example}
\numberwithin{equation}{section}

%------------------------------------------------------------------------------------%

\begin{document}

\title{Expressions for the g-Drazin inverse in a Banach algebra}

\author{Huanyin Chen}
\author{Marjan Sheibani}
\address{
Department of Mathematics\\ Hangzhou Normal University\\ Hang -zhou, China}
\email{<huanyinchen@aliyun.com>}
\address{Women's University of Semnan (Farzanegan), Semnan, Iran}
\email{<sheibani@fgusem.ac.ir>}

\subjclass[2010]{15A09, 47L10, 32A65.} \keywords{generalized Drazin inverse; additive property; spectral idempotent; block matrix; Banach algebra.}

\begin{abstract}
We explore the generalized Drazin inverse in a Banach algebra. Let $\mathcal{A}$ be a Banach algebra, and let $a,b\in \mathcal{A}^{d}$.
If $ab=\lambda a^{\pi}bab^{\pi}$ then $a+b\in \mathcal{A}^{d}$. The explicit representation of $(a+b)^d$ is also presented. As applications of our results, we present new representations for the generalized Drazin inverse of a block matrix in a Banach algebra. The main results of Liu and Qin [Representations for the generalized Drazin inverse of the sum in a Banach algebra and its application for some operator matrices, Sci. World J., {\bf 2015}, 156934.8] are extended.
\end{abstract}

\maketitle

\section{Introduction}

Throughout the paper, $\mathcal{A}$ is a complex Banach algebra with an identity. The commutant of $a\in \mathcal{A}$ is defined by $comm(a)=\{x\in
\mathcal{A}~|~xa=ax\}$. An element $a$ in $\mathcal{A}$ has g-Drazin inverse (that is, generalized Drazin inverse) provided that there exists $b\in comm(a)$ such that $b=bab$ and $a-a^2b\in \mathcal{A}^{qnil}.$ Here, $\mathcal{A}^{qnil}$ is the set of all quasinilpotents in $\mathcal{A}$, i.e.,
$$\mathcal{A}^{qnil}=\{a\in \mathcal{A}~|~1+ax\in U(\mathcal{A})~\mbox{for
every}~x\in comm(a)\}.$$  For a Banach algebra $\mathcal{A}$ we have $$a\in \mathcal{A}^{qnil}\Leftrightarrow
\lim\limits_{n\to\infty}\parallel a^n\parallel^{\frac{1}{n}}=0\Leftrightarrow 1+\lambda a\in U(\mathcal{A})~\mbox{for any}~ \lambda\in {\Bbb C}.$$
We use $\mathcal{A}^{d}$ to denote the set of all g-Drazin invertible elements in $\mathcal{A}$.
As it is well known, $a\in \mathcal{A}^{d}$ if and only if there exists an idempotent $p\in comm(a)$ such that $a+p$ is invertible and $ap\in \mathcal{A}^{qnil}$ (see ~\cite[Theorem 4.2]{J}). The objective of this paper is to further explore the generalized Drazin inverse in a Banach algebra.

The g-Drazin invertibility of the sum of two elements in a Banach algebra is attractive. Many authors have studied such problems from many different views, e.g., ~\cite{CD, CL, DC, DW, K, M, MZ2, Z3}. In Section 2, we investigate when the sum of two g-Drazin invertible elements in a Banach algebra has g-Drazin inverse. Let $\mathcal{A}$ be a Banach algebra, and let $a,b\in \mathcal{A}^{d}$. If $ab=\lambda a^{\pi}bab^{\pi}$, we prove that $a+b\in \mathcal{A}^{d}$. The explicit representation of $(a+b)^d$ is also presented. This extends ~\cite[Theorem 4]{K} to more general setting.

It is a hard problem to find a formula for the g-Drazin inverse of a block matrix. There have been many papers on this subject under different conditions, e.g., ~\cite{DC, D, DM, DD, ZM}. Let $M=\left(
  \begin{array}{cc}
    A & B \\
    C & D
  \end{array}
\right)\in M_2(\mathcal{A})$, $A$ and $D$ have g-Drazin inverses. If $a\in \mathcal{A}$ has g-Drazin inverse $a^{d}$. The element $a^{\pi}=1-aa^{d}$ is called the spectral idempotent of $a$. In Section 3, we are concerned with new conditions on spectral idempotent matrices under which $M$ has g-Drazin inverse. If $BD=\lambda (BC)^{\pi}ABD^{\pi}$ and $CA=\lambda (CB)^{\pi}DCA^{\pi}$, we prove that $M\in M_2(\mathcal{A})^d$. The formula for $M^d$ is given as well. This extends ~\cite[Theorem 10]{K} to the wider case.

Finally, in the last section, we present certain simpler representations of the g-Drazin inverse of the block matrix $M$. If $BD=\lambda A^{\pi}AB, DC=\lambda^{-1}D^{\pi}CAA^{\pi}$ and $BC=0$, then $M\in M_2(\mathcal{A})^d$ and
$$M^d=\left(
\begin{array}{cc}
A^d&(A^d)^2B+\sum\limits_{n=0}^{\infty}A^nB(D^d)^{n+2}\\
C(A^d)^2&D^d+C(A^d)^3B+\sum\limits_{n=0}^{\infty}\sum\limits_{k=0}^{\infty}D^{k-1}CA^{n-k}B(D^d)^{n+2}
\end{array}
\right).$$

Let $p\in \mathcal{A}$ be an idempotent, and
let $x\in \mathcal{A}$. Then we write $$x=pxp+px(1-p)+(1-p)xp+(1-p)x(1-p),$$
and induce a Pierce representation given by the matrix
$$x=\left(\begin{array}{cc}
pxp&px(1-p)\\
(1-p)xp&(1-p)x(1-p)
\end{array}
\right)_p.$$ We use $\mathcal{A}^{-1}$ to denote the set of all invertible elements in $\mathcal{A}$. $\lambda$ always stands for a nonzero complex number.

\section{Additive results}

In this section we establish some additive properties of g-Drazin inverse in Banach algebras. We begin with
\begin{lem} Let $$x=\left(\begin{array}{cc}
a&0\\
c&b
\end{array}
\right)_p~\mbox{or}~\left(\begin{array}{cc}
b&c\\
0&a
\end{array}
\right)_p$$  Then $$x^d=\left(\begin{array}{cc}
a^d&0\\
z&b^d
\end{array}
\right)_p,~\mbox{or}~ \left(\begin{array}{cc}
b^d&z\\
0&a^d
\end{array}
\right)_p,$$ where $$\begin{array}{c}
z=(b^d)^2\big(\sum\limits_{i=0}^{\infty}(b^d)^ica^i\big)a^{\pi}+b^{\pi}\big(\sum\limits_{i=0}^{\infty}b^ic(a^d)^i\big)(a^d)^2-b^dca^d.
\end{array}$$
\end{lem}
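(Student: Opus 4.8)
The plan is to prove the statement by verifying directly that
$$y=\left(\begin{array}{cc} a^d & 0 \\ z & b^d \end{array}\right)_p$$
satisfies the three defining conditions of the g-Drazin inverse from the introduction: $y\in comm(x)$, $y=yxy$, and $x-x^2y\in \mathcal{A}^{qnil}$. I would treat only the lower-triangular form, since the upper-triangular case follows by interchanging the roles of $p$ and $1-p$; and I would read $a^d,a^\pi$ (resp. $b^d,b^\pi$) in the corner algebra $p\mathcal{A}p$ (resp. $(1-p)\mathcal{A}(1-p)$), as the notation indicates.

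First I would record that the two series defining $z$ converge. Using $a^ia^\pi=(aa^\pi)^i$ and $b^ib^\pi=(bb^\pi)^i$ together with the quasinilpotency of $aa^\pi$ and $bb^\pi$, while $\|(a^d)^i\|^{1/i}$ and $\|(b^d)^i\|^{1/i}$ stay bounded by the spectral-radius rate, one gets $\|(b^d)^{i}ca^ia^\pi\|^{1/i}\to 0$ and $\|b^\pi b^ic(a^d)^{i}\|^{1/i}\to 0$, hence absolute convergence. I would then isolate the bookkeeping relations that drive every later cancellation: the annihilations $b^\pi b^d=0$ and $a^da^\pi=0$, together with the shift identities $b(b^d)^{k}=(b^d)^{k-1}$ and $(a^d)^{k}a=(a^d)^{k-1}$ for $k\ge 2$, and the commutation $a^\pi a=aa^\pi$.

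Since the diagonal blocks of $xy$ and $yx$ agree automatically, commutativity reduces to the Sylvester-type identity $bz-za=b^dc-ca^d$ in the $(2,1)$-entry. Writing $z=z_1+z_2+z_3$ for its three summands, I expect each difference $bz_i-z_ia$ to telescope after reindexing: $bz_1-z_1a$ should collapse to $b^dca^\pi$, $bz_2-z_2a$ to $-b^\pi ca^d$, and $bz_3-z_3a$ to $-ca^d+b^\pi ca^d+b^dc-b^dca^\pi$, so the cross terms cancel and the sum is exactly $b^dc-ca^d$. For $y=yxy$, using $aa^d=1-a^\pi$ and $bb^d=1-b^\pi$ reduces the $(2,1)$-entry requirement to $z=za^\pi+b^\pi z-b^dca^d$; this is immediate from the absorbing relations $z_1a^\pi=z_1,\ b^\pi z_1=0,\ z_2a^\pi=0,\ b^\pi z_2=z_2,\ z_3a^\pi=b^\pi z_3=0$, which give $za^\pi=z_1$ and $b^\pi z=z_2$.

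Finally, for the quasinilpotency condition I would compute
$$x-x^2y=\left(\begin{array}{cc} aa^\pi & 0 \\ \ast & bb^\pi \end{array}\right)_p,$$
a lower-triangular matrix whose diagonal entries $aa^\pi$ and $bb^\pi$ are quasinilpotent; since a triangular matrix over a Banach algebra with quasinilpotent diagonal blocks is quasinilpotent (its spectrum lies in the union of the spectra of the diagonal entries), the exact value of the $(2,1)$-entry is irrelevant and condition (iii) follows. The main obstacle is the commutativity step: one must carefully align the index-shifted series, repeatedly invoking $a^\pi a=aa^\pi$ and the annihilation identities so that the telescoping is exact; once that bookkeeping is in place, the $yxy=y$ identity and the quasinilpotency are short.
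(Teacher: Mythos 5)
Your proposal is correct, but it is worth noting that the paper does not actually prove this lemma at all: its ``proof'' is a one-line citation to \cite[Lemma 2.1]{CD}, so your direct verification is a genuinely self-contained alternative to outsourcing the result. I checked your key computations and they hold up: writing $z=z_1+z_2+z_3$, the shift identities $b(b^d)^k=(b^d)^{k-1}$ and $(a^d)^k a=(a^d)^{k-1}$ ($k\ge 2$) do give the telescoped values $bz_1-z_1a=b^dca^{\pi}$, $bz_2-z_2a=-b^{\pi}ca^d$, and $bz_3-z_3a=-ca^d+b^{\pi}ca^d+b^dc-b^dca^{\pi}$, whose sum is exactly $b^dc-ca^d$ as required for $xy=yx$; the absorbing relations $za^{\pi}=z_1$, $b^{\pi}z=z_2$ reduce $yxy=y$ to the identity $z=z_1+z_2-b^dca^d$, which is trivially true; and the root-test convergence argument using the quasinilpotency of $aa^{\pi}$, $bb^{\pi}$ is sound. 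Two small points deserve explicit mention if you write this up. First, the claim that a Pierce-triangular element with quasinilpotent diagonal corners is quasinilpotent should be justified: for $\lambda\neq 0$ the element $x-x^2y-\lambda$ is Pierce-triangular with both diagonal entries invertible in their corner algebras, hence invertible (triangular inversion), so the spectrum of $x-x^2y$ is $\{0\}$. Second, having verified that $y$ satisfies the three defining conditions, the conclusion $x^d=y$ (rather than merely ``$y$ is a g-Drazin inverse'') uses the uniqueness of the g-Drazin inverse, which is Koliha's theorem \cite{J}; this is standard but is the step that turns your verification into the stated formula. With those two references made explicit, your proof is complete, and it has the advantage over the paper's treatment of making the lemma independent of \cite{CD}.
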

\begin{proof} See ~\cite[Lemma 2.1]{CD}.\end{proof}

\begin{lem} Let $\mathcal{A}$ be a Banach algebra, and let $a,b\in\mathcal{A}^{qnil}$. If $ab=\lambda ba$, then $a+b\in \mathcal{A}^{qnil}$.
\end{lem}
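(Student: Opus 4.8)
The plan is to prove quasinilpotence directly from the spectral-radius characterisation recalled in the introduction, namely by showing $\lim_{n\to\infty}\|(a+b)^n\|^{1/n}=0$. The starting point is to expand $(a+b)^n=\sum_{w}w$ as a sum over all $2^n$ words $w$ of length $n$ in the two letters $a$ and $b$, and then to use the relation $ab=\lambda ba$ to put each word into a normal form. Writing $ba=\lambda^{-1}ab$ (respectively $ab=\lambda ba$), any word $w$ containing exactly $k$ copies of $a$ can be rewritten as $\lambda^{-\mathrm{inv}(w)}a^k b^{n-k}$ (respectively $\lambda^{\mathrm{inv}'(w)}b^{n-k}a^k$), where the exponent counts the number of adjacent transpositions needed to sort the word and therefore lies between $0$ and $k(n-k)$.

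The decisive observation, and what I expect to be the main obstacle, is that a crude bound on these powers of $\lambda$ is useless: the exponent can be of order $n^2$, so a factor $|\lambda|^{\pm k(n-k)}$ contributes $|\lambda|^{\pm n}$ after extracting $n$-th roots and wrecks the estimate. To get around this I would split on the modulus of $\lambda$. If $|\lambda|\ge 1$ I sort every word to the form $a^k b^{n-k}$, so that each coefficient satisfies $|\lambda|^{-\mathrm{inv}(w)}\le 1$; if $|\lambda|<1$ I sort instead to $b^{n-k}a^k$, so that $|\lambda|^{\mathrm{inv}'(w)}\le 1$. In either case the $\lambda$-factors have modulus at most $1$, and since there are exactly $\binom{n}{k}$ words with $k$ copies of $a$, the triangle inequality yields exactly the estimate available in the commuting case:
\[
\|(a+b)^n\|\le \sum_{k=0}^{n}\binom{n}{k}\,\|a^k\|\,\|b^{n-k}\|.
\]

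It then remains to run the standard quasinilpotence argument on the right-hand side. Since $a,b\in\mathcal{A}^{qnil}$ we have $\|a^k\|^{1/k}\to 0$ and $\|b^m\|^{1/m}\to 0$, so the power series $F(t)=\sum_{k\ge 0}\frac{\|a^k\|}{k!}t^k$ and $G(t)=\sum_{m\ge 0}\frac{\|b^m\|}{m!}t^m$ have infinite radius of convergence. Dividing the displayed inequality by $n!$ gives $\frac{\|(a+b)^n\|}{n!}\le\sum_{k}\frac{\|a^k\|}{k!}\cdot\frac{\|b^{n-k}\|}{(n-k)!}$, hence $\sum_{n\ge 0}\frac{\|(a+b)^n\|}{n!}t^n\le F(t)G(t)$, an entire function of $t$. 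Consequently $\limsup_n\big(\|(a+b)^n\|/n!\big)^{1/n}=0$, and because $(n!)^{1/n}\to\infty$ this forces $\|(a+b)^n\|^{1/n}\to 0$. Therefore $a+b\in\mathcal{A}^{qnil}$, as claimed.
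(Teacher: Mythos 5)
Your combinatorial reduction is correct, and it is essentially the idea behind the result the paper invokes (the paper itself offers no proof of this lemma, only a citation to \cite{C} and \cite{DW2}): sorting each word into the normal form $a^k b^{n-k}$ when $|\lambda|\ge 1$, and into $b^{n-k}a^k$ when $|\lambda|<1$, ensures every accumulated power of $\lambda$ has modulus at most $1$, which legitimately yields
$$\|(a+b)^n\|\le \sum_{k=0}^{n}\binom{n}{k}\|a^k\|\,\|b^{n-k}\|=:c_n.$$
This case split on $|\lambda|$ is exactly the right move and is the only place the hypothesis $ab=\lambda ba$ is needed.

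The genuine gap is in your last step. From $\limsup_n (c_n/n!)^{1/n}=0$ you cannot conclude $c_n^{1/n}\to 0$: writing $c_n^{1/n}=(c_n/n!)^{1/n}\,(n!)^{1/n}$, the second factor tends to $+\infty$, so the product is an indeterminate $0\cdot\infty$; the divergence of $(n!)^{1/n}$ works \emph{against} you, not for you. Concretely, $c_n=\sqrt{n!}$ satisfies $(c_n/n!)^{1/n}\to 0$ while $c_n^{1/n}\to\infty$, so the implication you assert is false in general --- the exponential generating function normalization discards precisely the information you need. The repair is standard and short: given $\varepsilon>0$, quasinilpotence of $a$ and $b$ gives a constant $C\ge 1$ with $\|a^k\|\le C\varepsilon^k$ and $\|b^k\|\le C\varepsilon^k$ for all $k\ge 0$, whence
$$c_n\le C^2\sum_{k=0}^{n}\binom{n}{k}\varepsilon^k\varepsilon^{n-k}=C^2(2\varepsilon)^n,$$
so $\limsup_n\|(a+b)^n\|^{1/n}\le 2\varepsilon$ for every $\varepsilon>0$, which gives the claim. (Alternatively, bound $\binom{n}{k}\le 2^n$ and run your generating-function argument with the \emph{ordinary} series $\sum_k\|a^k\|t^k$ and $\sum_m\|b^m\|t^m$, whose Cauchy product is again entire; that version of the argument is sound.) With either replacement for the final step, your proof is complete.
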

\begin{proof} See~\cite[Lemma 2.1]{C} and \cite[Lemma 2.1]{DW2}.\end{proof}

\begin{lem} Let $\mathcal{A}$ be a Banach algebra, and let $a\in \mathcal{A}^{qnil},b\in \mathcal{A}^{d}$. If $$ab=\lambda bab^{\pi},$$ then $a+b\in \mathcal{A}^{d}$ and
$$(a+b)^d=b^d+\sum\limits_{n=0}^{\infty}(b^d)^{n+2}a(a+b)^n.$$\end{lem}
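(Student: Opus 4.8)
The plan is to realize $a+b$ as a block lower-triangular matrix with respect to the spectral idempotent of $b$, so that Lemma 2.1 can be applied directly. Write $p=b^{\pi}=1-bb^d$, which is an idempotent in $comm(b)$. Relative to $p$ I would express both $b$ and $a$ in Pierce form. Because $p\in comm(b)$, the element $b$ is block diagonal, $b=\bigl(\begin{smallmatrix} b_1 & 0 \\ 0 & b_2\end{smallmatrix}\bigr)_p$, where $b_1=(1-p)b(1-p)$ is invertible in the corner $(1-p)\mathcal{A}(1-p)$ with inverse $b^d$, and $b_2=pbp$ is quasinilpotent since $pb=bp=b b^{\pi}\in\mathcal{A}^{qnil}$.

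The crucial step is to read off the block structure of $a$ from the hypothesis $ab=\lambda bab^{\pi}$. First I would multiply this relation on the right by $bb^d$: since $b^{\pi}bb^d=0$, this gives $ab\,bb^d=0$, and after cancelling the invertible part of $b$ in the corner $(1-p)\mathcal{A}(1-p)$ one concludes that the $(1,2)$ and $(2,2)$ blocks of $a$ involving $(1-p)$ on the right vanish appropriately; more precisely I expect to obtain $a(1-p)=0$ in the relevant sense, forcing $a$ to be block lower-triangular, $a=\bigl(\begin{smallmatrix} a_1 & 0 \\ a_3 & a_2\end{smallmatrix}\bigr)_p$. The hard part will be disentangling exactly which blocks vanish and verifying that the diagonal blocks $a_1=(1-p)a(1-p)$ and $a_2=pap$ are each quasinilpotent: $a$ itself is quasinilpotent, so its compressions to the corners inherit quasinilpotence, but this needs a short argument since a corner of a quasinilpotent element need not in general be quasinilpotent unless one uses the triangular structure together with Lemma 2.2 or a spectral-radius estimate.

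Once the triangular forms are in hand, $a+b=\bigl(\begin{smallmatrix} a_1+b_1 & 0 \\ a_3 & a_2+b_2\end{smallmatrix}\bigr)_p$. In the lower corner, $a_2+b_2$ is a sum of two quasinilpotents; I would invoke Lemma 2.2 (after checking the twisted commutativity $a_2 b_2=\lambda b_2 a_2$, which should follow by compressing the hypothesis) to get $a_2+b_2\in\mathcal{A}^{qnil}$, hence its g-Drazin inverse is $0$. In the upper corner $a_1+b_1$ is invertible, being the sum of an invertible element and a quasinilpotent that commute suitably, so its g-Drazin inverse equals its ordinary inverse. Thus both diagonal blocks are g-Drazin invertible, and Lemma 2.1 yields $(a+b)^d$ with the off-diagonal term $z$ given by the series in that lemma.

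The final task is the bookkeeping that converts the abstract block formula into the stated closed form $(a+b)^d=b^d+\sum_{n=0}^{\infty}(b^d)^{n+2}a(a+b)^n$. Here I would identify the $(1,1)$ block $(a_1+b_1)^{-1}$ and the $(2,1)$ block $z$ in terms of the original $a,b,a^d$ (with $b_2^d=0$ simplifying the series in Lemma 2.1 to a single surviving sum), then reassemble $p$ and $1-p$ and recognize the resulting expression as the claimed series. I expect the main obstacle to be precisely this reassembly: showing that the two separate series pieces coming from Lemma 2.1 collapse, using $b^{\pi}b^d=0$ and $a(1-p)=0$, into the single clean series $\sum_{n\ge 0}(b^d)^{n+2}a(a+b)^n$, and confirming convergence of that series in the Banach algebra norm.
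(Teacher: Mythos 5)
Your proposal follows essentially the same route as the paper's proof: the paper splits with respect to $p=bb^d$ (you use the complementary idempotent $b^{\pi}$, which is the same decomposition), derives $abb^d=0$ from the hypothesis to obtain a triangular Pierce form, handles the quasinilpotent corner by compressing the twisted relation to $a_4b_2=\lambda b_2a_4$ and applying Lemma 2.2, verifies quasinilpotence of the compression $b^{\pi}ab^{\pi}$ via Cline's formula (your spectral-radius argument, using $ab^{\pi}=a$, works equally well), and assembles everything with Lemma 2.1.

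One bookkeeping slip to fix: in your convention, where the invertible corner $(1-p)\mathcal{A}(1-p)$ is listed first, the relation $a(1-p)=abb^d=0$ annihilates the first \emph{column} of the Pierce matrix of $a$, so $a$ and $a+b$ come out block \emph{upper} triangular (the only nonzero off-diagonal entry is $bb^d\,a\,b^{\pi}$ in position $(1,2)$), not lower triangular as you wrote; this is not cosmetic, because the two triangular cases of Lemma 2.1 produce mirror-image series, and only the upper form yields the stated left-handed expression $\sum_{n\ge 0}(b^d)^{n+2}a(a+b)^n$, whereas the lower form would give $\sum_{n\ge 0}(a+b)^na(b^d)^{n+2}$. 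Since your own relation $abb^d=0$ forces the correct form, the slip is self-correcting once the blocks are actually computed; it also shows that the diagonal block of $a$ in the invertible corner is $0$, so no ``invertible plus quasinilpotent'' argument is needed there.
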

\begin{proof} Let $p=bb^{d}$. Then we have
$$b=\left(\begin{array}{cc}
b_1&0\\
0&b_2
\end{array}
\right)_p, a=\left(\begin{array}{cc}
a_{1}&a_{2}\\
a_{3}&a_4
\end{array}
\right)_p.$$ Hence,
$$b^d=\left(\begin{array}{cc}
b_1^{-1}&0\\
0&0
\end{array}
\right)_p~\mbox{and}~b^{\pi}=\left(\begin{array}{cc}
0&0\\
0&1-bb^d
\end{array}
\right)_p.$$ Since $ab=\lambda bab^{\pi},$, we get
$$\left(\begin{array}{cc}
a_1b_1&a_2b_2\\
a_3b_1&a_4b_2
\end{array}
\right)_p=ab=\lambda bab^{\pi}=\left(\begin{array}{cc}
0&\lambda b_1a_2\\
0&\lambda b_2a_4
\end{array}
\right)_p.$$
Thus, $a_1b_1=0$ and $a_3b_1=0$, and then $a_1=0$ and $a_3=0$. Obviously, $b_2=b-b^2b^d\in ((1-p)\mathcal{A}(1-p))^{qnil}$.
Since $ab=\lambda bab^{\pi},$ we have $abb^d=\lambda bab^{\pi}b^d=0$. Hence $a(1-bb^d)=a\in \mathcal{A}^{qnil}$. In view of Cline's formula (see~\cite[Theorem 2.1]{L}),
we prove that $a_4=b^{\pi}ab^{\pi}\in \mathcal{A}^{qnil}$. As $a_4b_2=\lambda b_2a_4$, by Lemma 2.2, we show that
$a_4+b_2\in ((1-p)\mathcal{A}(1-p))^{qnil}$, i.e., $(a_4+b_2)^d=0$.

Since $$a+b=\left(\begin{array}{cc}
b_1&a_2\\
0&a_4+b_2
\end{array}
\right)_p,$$ it follows by Lemma 2.1 that $$(a+b)^d=\left(\begin{array}{cc}
b_1&a_2\\
0&a_4+b_2
\end{array}
\right)^d=\left(\begin{array}{cc}
b_1^{-1}&z\\
0&0
\end{array}
\right)_p,$$ where $z=(b^d)^2\big(\sum\limits_{i=0}^{\infty}(b^d)^ia(a_4+b_2)^i\big).$ Since $abb^d=0$, we derive  $$(a+b)^d=b^d+\sum\limits_{n=0}^{\infty}(b^d)^{n+2}a(a+b)^n.$$\end{proof}

Now we state one of our main results.

\begin{thm} Let $\mathcal{A}$ be a Banach algebra, and let $a,b\in \mathcal{A}^{d}$. If $$ab=\lambda a^{\pi}bab^{\pi},$$ then $a+b\in \mathcal{A}^{d}$ and
$$\begin{array}{lll}
(a+b)^d&=&b^{\pi}a^d+b^da^{\pi}+\sum\limits_{n=0}^{\infty}(b^d)^{n+2}a(a+b)^na^{\pi}\\
&+&b^{\pi}\sum\limits_{n=0}^{\infty}(a+b)^nb(a^d)^{n+2}\\
&-&\sum\limits_{n=0}^{\infty}\sum\limits_{k=0}^{\infty}(b^d)^{k+1}a(a+b)^{n+k}b(a^d)^{n+2}\\
&-&\sum\limits_{n=0}^{\infty}(b^d)^{n+2}a(a+b)^nba^d.
\end{array}$$\end{thm}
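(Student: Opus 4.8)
The plan is to reduce to Lemma 2.3 by passing to the Pierce decomposition relative to the spectral idempotent of $a$. Set $p=aa^d$, an idempotent in $comm(a)$, so that
$$a=\left(\begin{smallmatrix} a_1 & 0\\ 0 & a_2\end{smallmatrix}\right)_p,$$
where $a_1=pa$ is invertible in $p\mathcal{A}p$ (with inverse $a^d$ there) and $a_2=a^{\pi}aa^{\pi}=a-a^2a^d$ lies in $\mathcal{B}:=a^{\pi}\mathcal{A}a^{\pi}=(1-p)\mathcal{A}(1-p)$ and is quasinilpotent in $\mathcal{B}$. Writing $b=\left(\begin{smallmatrix} b_1 & b_2\\ b_3 & b_4\end{smallmatrix}\right)_p$, the first move is to multiply $ab=\lambda a^{\pi}bab^{\pi}$ on the left by $p$: since $pa^{\pi}=0$ the right-hand side vanishes, so $pab=0$, i.e. $a^2a^db=0$. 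Because $a_1$ is invertible in $p\mathcal{A}p$, this forces $b_1=b_2=0$, so $pb=0$ and $b=\left(\begin{smallmatrix} 0 & 0\\ b_3 & b_4\end{smallmatrix}\right)_p$ is strictly lower triangular. Hence $a+b=\left(\begin{smallmatrix} a_1 & 0\\ b_3 & a_2+b_4\end{smallmatrix}\right)_p$ is lower triangular, and by Lemma 2.1 the whole computation reduces to finding $(a_2+b_4)^d$ in the corner $\mathcal{B}$.

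The second step computes $(a_2+b_4)^d$ via Lemma 2.3 inside $\mathcal{B}$. First I would check $b_4\in\mathcal{B}^d$: from $pb=0$ and $b^d=b(b^d)^2$ one gets $pb^d=0$, so $b^d=\left(\begin{smallmatrix}0&0\\ c_3&c_4\end{smallmatrix}\right)_p$; comparing blocks in $b^db=bb^d$, $b^dbb^d=b^d$ and $b-b^2b^d\in\mathcal{A}^{qnil}$ shows that $c_4=a^{\pi}b^da^{\pi}$ satisfies the three defining relations of the g-Drazin inverse of $b_4$ in $\mathcal{B}$, so $b_4^d=a^{\pi}b^da^{\pi}$ and $b_4^{\pi}=a^{\pi}-b_4b_4^d$. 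The key is then to read the hypothesis blockwise: with $b_1=b_2=0$ one has $ab=\left(\begin{smallmatrix}0&0\\ a_2b_3&a_2b_4\end{smallmatrix}\right)_p$ while $\lambda a^{\pi}bab^{\pi}=\lambda bab^{\pi}$, and the $(2,2)$-entry of the resulting identity is exactly $a_2b_4=\lambda b_4a_2b_4^{\pi}$ in $\mathcal{B}$. Since $a_2\in\mathcal{B}^{qnil}$ and $b_4\in\mathcal{B}^d$, Lemma 2.3 applies in $\mathcal{B}$ and gives
$$(a_2+b_4)^d=b_4^d+\sum_{n=0}^{\infty}(b_4^d)^{n+2}a_2(a_2+b_4)^n.$$

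Finally, applying Lemma 2.1 to the lower-triangular form of $a+b$ (with corner data $a_1^{-1}$, $(a_2+b_4)^d$ and $a_1^{\pi}=0$) yields $(a+b)^d$ as a block matrix whose off-diagonal entry is $z=(a_2+b_4)^{\pi}\big(\sum_i(a_2+b_4)^ib_3a_1^{-i}\big)a_1^{-2}-(a_2+b_4)^db_3a_1^{-1}$. The remaining task is to rewrite this block answer in the global quantities using $pb=0$, $pb^d=0$, $a_1^{-1}=a^d$ on $p\mathcal{A}p$, $a^{\pi}(a+b)a^{\pi}=a_2+b_4$, $a^{\pi}(b^d)^na^{\pi}=(b_4^d)^n$, and the fact that the $(2,2)$-block of $(a+b)^n$ is $(a_2+b_4)^n$; substituting the displayed series into both $z$ and the diagonal corner produces the stated closed form. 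I expect this reconciliation to be the main obstacle: the index bookkeeping needed to merge $\sum(b_4^d)^{n+2}a_2(a_2+b_4)^n$ with the two series hidden in $z$ into precisely the terms $b^{\pi}a^d+b^da^{\pi}$, $\sum(b^d)^{n+2}a(a+b)^na^{\pi}$, $b^{\pi}\sum(a+b)^nb(a^d)^{n+2}$, the double sum $\sum_n\sum_k(b^d)^{k+1}a(a+b)^{n+k}b(a^d)^{n+2}$, and the correction $-\sum(b^d)^{n+2}a(a+b)^nba^d$ is delicate, while convergence of every series is inherited from Lemma 2.3. All steps upstream—the block reduction and the verification of the Lemma 2.3 hypothesis in $\mathcal{B}$—are routine once $pb=0$ has been established.
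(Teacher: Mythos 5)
Your reduction coincides step for step with the paper's own proof: the paper also sets $p=aa^d$, derives $aa^db=0$ from the hypothesis (since $a^da^{\pi}=0$), concludes that $b$ is strictly lower triangular in the Pierce decomposition, reads off the corner identity $a_2b_4=\lambda b_4a_2b_4^{\pi}$ from the $(2,2)$-block of the hypothesis, applies Lemma 2.3 in $(1-p)\mathcal{A}(1-p)$ to $a_2+b_4$, and finishes with Lemma 2.1 applied to the lower-triangular form of $a+b$. The one cosmetic difference is that the paper obtains $b_4\in\big((1-p)\mathcal{A}(1-p)\big)^d$ from Cline's formula applied to $b=(1-aa^d)b$, whereas you verify the three defining relations directly; that also works, but your check that $b_4-b_4^2c_4$ is quasinilpotent \emph{in the corner algebra} still needs a remark on why quasinilpotence of $(b-b^2b^d)(1-p)$ passes to the corner, e.g.\ again via Cline's formula.

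The genuine gap is the step you explicitly defer. The passage from the Lemma 2.1 expression for $z$ to the stated closed form is not mere index bookkeeping: it hinges on an identity you never derive, namely $a_2b_4^d=0$, which follows from the corner hypothesis via $a_2b_4^d=a_2b_4(b_4^d)^2=\lambda b_4a_2b_4^{\pi}(b_4^d)^2=0$, using $b_4^{\pi}(b_4^d)^2=0$. This identity is what allows the spectral idempotent occurring in $z$ to be expanded as
$$(a_2+b_4)^{\pi}=b_4^{\pi}-\sum\limits_{n=0}^{\infty}(b_4^d)^{n+1}a_2(a_2+b_4)^n,$$
and it is this expansion, substituted into $z$, that produces the term $b^{\pi}\sum_{n}(a+b)^nb(a^d)^{n+2}$, the double sum, and the final correction term of the theorem; without it the closed form simply cannot be reached. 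Together with the global identities $a^db=0$ and $ab^d=0$ (the latter also needs the hypothesis: $ab^d=(ab)(b^d)^2=\lambda a^{\pi}bab^{\pi}(b^d)^2=0$), which underwrite your dictionary between corner and global quantities (for instance, $a_1^{-1}$ in the $(1,1)$-corner together with the piece $-b_4^db_3a_1^{-1}$ of $z$ assembles into $b^{\pi}a^d$, and the remaining piece of that term gives $-\sum_n(b^d)^{n+2}a(a+b)^nba^d$), the substitution becomes mechanical; this is exactly how the paper closes the argument. As written, your proposal stops where the actual derivation of the displayed formula begins, so it does not yet establish the stated expression for $(a+b)^d$.
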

\begin{proof} Let $p=aa^{d}$. Then we have
$$a=\left(\begin{array}{cc}
a_1&0\\
0&a_2
\end{array}
\right)_p, b=\left(\begin{array}{cc}
b_{11}&b_{12}\\
b_{1}&b_2
\end{array}
\right)_p.$$ Since $ab=\lambda a^{\pi}bab^{\pi},$ we have $aa^db=\lambda a^da^{\pi}bab^{\pi}=0$; hence,
$b_{11}=b_{12}=0$. Thus,
$$a=\left(\begin{array}{cc}
a_1&0\\
0&a_2
\end{array}
\right)_p, b=\left(\begin{array}{cc}
0&0\\
b_{1}&b_2
\end{array}
\right)_p,$$ Hence,
$$a^d=\left(\begin{array}{cc}
a_1^d&0\\
0&0
\end{array}
\right)_p, b^d=\left(\begin{array}{cc}
0&0\\
(b_2^d)^2b_{1}&b_2^d
\end{array}
\right)_p.$$ Thus, we have
$$a^{\pi}=\left(\begin{array}{cc}
0&0\\
0&1-aa^d
\end{array}
\right)_p, b^{\pi}=\left(\begin{array}{cc}
1&0\\
-b_2^db_{1}&b_2^{\pi}
\end{array}
\right)_p.$$
Clearly, $a_2=(1-p)a(1-p)=a-a^2a^{d}\in \mathcal{A}^{qnil}$.
Since $(1-aa^d)b=b\in \mathcal{A}^{qnil}$, it follows by Cline's formula that $b_2=a^{\pi}ba^{\pi}\in ((1-p)\mathcal{A}(1-p))^{d}$.
As $ab=\lambda a^{\pi}bab^{\pi},$ we have
$$\begin{array}{ll}
\left(
\begin{array}{cc}
0&0\\
a_2b_1&a_2b_2
\end{array}
\right)&=ab=\lambda a^{\pi}bab^{\pi}\\
&=\lambda \left(
\begin{array}{cc}
0&0\\
b_1a_1-b_2^db_1&b_2a_2b_2^{\pi}
\end{array}
\right)\\
\end{array},$$ and then $$a_2b_2=\lambda b_2a_2b_2^{\pi}.$$ In view of Lemma 2.3,
$$(a_2+b_2)^d=b_2^d+\sum\limits_{n=0}^{\infty}(b_2^d)^{n+2}a_2(a_2+b_2)^n.$$
By virtue of Lemma 2.1, we have $$(a+b)^d=\left(\begin{array}{cc}
a_1^d&0\\
z&(a_2+b_2)^d
\end{array}
\right)=\left(\begin{array}{cc}
a^d&0\\
z&(a_2+b_2)^d\end{array}
\right),$$ where $$z=(a_2+b_2)^{\pi}\big(\sum\limits_{i=0}^{\infty}(a_2+b_2)^ib(a^d)^i\big)(a^d)^2-(a_2+b_2)^dba^d.$$
We easily see that $a_2b_2^d=(\lambda b_2a_2b_2^{\pi})(b_2^d)^2=0$; hence,
$$\begin{array}{lll}
(a_2+b_2)^{\pi}&=&(1-aa^d)-b_2b_2^d-\sum\limits_{n=0}^{\infty}(b_2^d)^{n+1}a_2(a_2+b_2)^n\\
&=&b_2^{\pi}-\sum\limits_{n=0}^{\infty}(b_2^d)^{n+1}a_2(a_2+b_2)^n.
\end{array}$$
Moreover, we have $$\begin{array}{lll}
z&=&\sum\limits_{i=0}^{\infty}b_2^{\pi}(a_2+b_2)^ib(a^d)^{i+2}\\
&-&\sum\limits_{n=0}^{\infty}\sum\limits_{i=0}^{\infty}(b_2^d)^{n+1}a_2(a_2+b_2)^{n+i}b(a^d)^{i+2}\\
&-&b_2^dba^d+\sum\limits_{n=0}^{\infty}(b_2^d)^{n+2}a_2(a_2+b_2)^nba^d.
\end{array}$$
Clearly, $a^db=ab^d=0$, we easily check that
$$\begin{array}{c}
\left(\begin{array}{cc}
a^d&0\\
-b_2^db_1a^d&0
\end{array}
\right)=b^{\pi}a^d,\\
\left(\begin{array}{cc}
0&0\\
b_2^{\pi}(a_2+b_2)^ib(a^d)^{i+2}&0
\end{array}
\right)=b^{\pi}(a+b)^ib(a^d)^{i+2},\\
\left(\begin{array}{cc}
0&0\\
(b_2^d)^{n+2}a_2(a_2+b_2)^nba^d&0
\end{array}
\right)=(b^d)^{n+2}a(a+b)^nba^d,\\
\left(\begin{array}{cc}
0&0\\
(b_2^d)^{n+1}a_2(a_2+b_2)^{n+i}b(a^d)^{i+2}&0
\end{array}
\right)=(b^d)^{n+1}a(a+b)^{n+i}b(a^d)^{i+2}.
\end{array}$$
Moreover, we have $$
\begin{array}{c}
\left(\begin{array}{cc}
0&0\\
0&b_4^d
\end{array}
\right)=b^d(1-aa^d),\\
\left(\begin{array}{cc}
0&0\\
0&(b_4^d)^{n+2}a_2(a_2+b_4)^n
\end{array}
\right)=(b^d)^{n+2}a(a+b)^n.
\end{array}$$
Therefore we easily obtain the result.\end{proof}

\begin{exam} Let $\mathcal{A}=M_3({\Bbb C})$ and let $$a=\left(\begin{array}{ccc}
0&0&0\\
1&0&0\\
0&1&0
\end{array}
\right), b=\left(\begin{array}{ccc}
0&0&0\\
1&0&0\\
0&2&0
\end{array}
\right)\in \mathcal{A}^d.$$ Then $ab=\frac{1}{2}a^{\pi}bab^{\pi},$ while $ab\neq a^{\pi}bab^{\pi}$.
\end{exam}
\begin{proof} It is clear that $a^3=b^3=0$, then $a^d=b^d=0$ which implies that $a^{\pi}=b^{\pi}=I_3.$
$$ab= \left(\begin{array}{ccc}
0&0&0\\
0&0&0\\
1&0&0
\end{array}
\right)=\frac{1}{2}a^{\pi}bab^{\pi},$$ while $ a^{\pi}bab^{\pi} = \left(\begin{array}{ccc}
0&0&0\\
0&0&0\\
1&0&0
\end{array}
\right)\neq ab$.  \end{proof}

\section{Block operator matrices}

In this section, we we turn to study the g-Drazin inverse of the block matrix $M$ by applying Theorem 2.4. We now derive

\begin{thm} Let $M=\left(
  \begin{array}{cc}
    A & B \\
    C & D
  \end{array}
\right)\in M_2(\mathcal{A})$, $A$ and $D$ have g-Drazin inverses. If $BD=\lambda (BC)^{\pi}ABD^{\pi}$ and $CA=\lambda (CB)^{\pi}DCA^{\pi}$, then $M\in M_2(\mathcal{A})^d$ and
$$\begin{array}{lll}
M^d&=&\left(
\begin{array}{cc}
A^d(BC)^{\pi}&A^{\pi}B(CB)^d\\
D^{\pi}C(BC)^d&D^d(CB)^{\pi}
\end{array}
\right)+\sum\limits_{n=0}^{\infty}(P^d)^{n+2}QM^nQ^{\pi}\\
&+&P^{\pi}\sum\limits_{n=0}^{\infty}M^nP(Q^d)^{n+2}\\
&-&\sum\limits_{n=0}^{\infty}\sum\limits_{k=0}^{\infty}(P^d)^{k+1}QM^{n+k}P(Q^d)^{n+2}\\
&-&\sum\limits_{n=0}^{\infty}(P^d)^{n+2}QM^nPQ^d.
\end{array}$$\end{thm}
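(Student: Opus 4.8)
The plan is to realize $M$ as a sum $M=P+Q$ to which Theorem 2.4 applies, taking
$$P=\begin{pmatrix} A & 0 \\ 0 & D \end{pmatrix}, \qquad Q=\begin{pmatrix} 0 & B \\ C & 0 \end{pmatrix},$$
so that $P$ and $Q$ play the roles of $b$ and $a$ in Theorem 2.4 respectively (with $a+b=M$). The diagonal block $P$ lies in $M_2(\mathcal{A})^d$ because $A,D\in\mathcal{A}^d$, with $P^d=\mathrm{diag}(A^d,D^d)$ and $P^{\pi}=\mathrm{diag}(A^{\pi},D^{\pi})$. The appearance of $(BC)^{\pi}$ and $(CB)^{\pi}$ in the hypotheses tacitly presupposes $BC,CB\in\mathcal{A}^d$; by Cline's formula these are equivalent, so I may assume both.

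The first real step is to show $Q\in M_2(\mathcal{A})^d$ and to compute $Q^d,Q^{\pi}$ explicitly. Here I would use that $Q^2=\mathrm{diag}(BC,CB)$ is g-Drazin invertible with $(Q^2)^d=\mathrm{diag}((BC)^d,(CB)^d)$, together with the standard fact that $x\in\mathcal{A}^d$ iff $x^2\in\mathcal{A}^d$, in which case $x^d=x(x^2)^d$. This yields
$$Q^d=\begin{pmatrix} 0 & B(CB)^d \\ C(BC)^d & 0 \end{pmatrix}, \qquad Q^{\pi}=\begin{pmatrix} (BC)^{\pi} & 0 \\ 0 & (CB)^{\pi} \end{pmatrix}.$$
Checking that this $Q^d$ genuinely commutes with $Q$ is where Cline's formula does the work: the intertwining identities $B(CB)^d=(BC)^dB$ and $C(BC)^d=(CB)^dC$ collapse both $QQ^d$ and $Q^dQ$ to $\mathrm{diag}(BC(BC)^d,CB(CB)^d)$. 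I expect this verification, namely establishing $Q\in M_2(\mathcal{A})^d$ with the correct spectral idempotent, to be the main obstacle; everything afterward is essentially bookkeeping.

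With $P,Q$ and their spectral data in hand, I would verify that the hypothesis of Theorem 2.4 for the pair $(a,b)=(Q,P)$, namely $QP=\lambda Q^{\pi}PQP^{\pi}$, is exactly the conjunction of the two given conditions. A direct block multiplication gives
$$QP=\begin{pmatrix} 0 & BD \\ CA & 0 \end{pmatrix}, \qquad \lambda Q^{\pi}PQP^{\pi}=\begin{pmatrix} 0 & \lambda (BC)^{\pi}ABD^{\pi} \\ \lambda (CB)^{\pi}DCA^{\pi} & 0 \end{pmatrix},$$
so equality of the two is precisely $BD=\lambda(BC)^{\pi}ABD^{\pi}$ and $CA=\lambda(CB)^{\pi}DCA^{\pi}$. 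Theorem 2.4 then applies and yields $M=Q+P\in M_2(\mathcal{A})^d$ together with its formula.

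Finally, I would translate the formula of Theorem 2.4 back into block form. The four series in the statement are literally the four series of Theorem 2.4 under the substitution $a=Q$, $b=P$, $a+b=M$, so they require no manipulation at all. Only the leading term $b^{\pi}a^d+b^da^{\pi}=P^{\pi}Q^d+P^dQ^{\pi}$ needs to be expanded; multiplying out the block matrices gives
$$P^{\pi}Q^d+P^dQ^{\pi}=\begin{pmatrix} A^d(BC)^{\pi} & A^{\pi}B(CB)^d \\ D^{\pi}C(BC)^d & D^d(CB)^{\pi} \end{pmatrix},$$
which is exactly the leading matrix in the claimed expression for $M^d$. This completes the plan.
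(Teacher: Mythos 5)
Your proposal is correct and follows essentially the same route as the paper: the same splitting $M=P+Q$ with $P=\mathrm{diag}(A,D)$, the same computation of $Q^d=Q(Q^2)^d$ and $Q^{\pi}$ via Cline's formula, the same identification of the hypotheses with $QP=\lambda Q^{\pi}PQP^{\pi}$, and the same application of Theorem 2.4 with $(a,b)=(Q,P)$. The only differences are cosmetic: you make explicit the tacit assumption $BC\in\mathcal{A}^d$ and the verification that $Q^d$ commutes with $Q$ (via $B(CB)^d=(BC)^dB$, $C(BC)^d=(CB)^dC$), points the paper leaves implicit.
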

\begin{proof} Clearly, we have $M=P+Q$, where
$$P=\left(
\begin{array}{cc}
A&0\\
0&D
\end{array}
\right), Q=\left(
\begin{array}{cc}
0&B\\
C&0
\end{array}
\right).$$ Then we have
$$\begin{array}{c}
P^d=\left(
\begin{array}{cc}
A^d&0\\
0&D^d
\end{array}
\right),P^{\pi}=\left(
\begin{array}{cc}
A^{\pi}&0\\
0&D^{\pi}
\end{array}
\right);\\
Q^2=\left(
\begin{array}{cc}
BC&0\\
0&CB
\end{array}
\right), (Q^2)^d=\left(
\begin{array}{cc}
(BC)^d&0\\
0&(CB)^d
\end{array}
\right).
\end{array}$$ By using Cline's formula, we get $$Q^d=Q(Q^2)^d=\left(
\begin{array}{cc}
0&B(CB)^d\\
C(BC)^d&0
\end{array}
\right).$$ Hence,
$$Q^{\pi}=\left(
\begin{array}{cc}
(BC)^{\pi}&0\\
0&(CB)^{\pi}
\end{array}
\right).$$
Clearly,
$$\begin{array}{c}
PQ=\left(
\begin{array}{cc}
0&AB\\
DC&0
\end{array}
\right),\\
QP=\left(
\begin{array}{cc}
0&BD\\
CA&0
\end{array}
\right),
\end{array}$$
and so
$$Q^{\pi}PQP^{\pi}=\left(
  \begin{array}{cc}
    0&(BC)^{\pi}ABD^{\pi}\\
    (CB)^{\pi}DCA^{\pi}& D
  \end{array}
\right).$$
By hypothesis, we have
$$QP=\lambda Q^{\pi}PQP^{\pi}.$$¡¡According to Theorem 2.4,
$M$ has g-Drain invesse. The representation of $M^d$ is easily obtained by Theorem 2.4.\end{proof}

\begin{cor} Let $M=\left(
  \begin{array}{cc}
    A & B \\
    C & D
  \end{array}
\right)\in M_2(\mathcal{A})$, $A$ and $D$ have g-Drazin inverses. If $BD=\lambda ABD^{\pi}, CA=\lambda DCA^{\pi}$ and $BC=0$, then $M\in M_2(\mathcal{A})^d$ and
$$\begin{array}{lll}
M^d&=&\left(
\begin{array}{cc}
A^d&0\\
0&D^d
\end{array}
\right)+\sum\limits_{n=0}^{\infty}(P^d)^{n+2}QM^n.
\end{array}$$\end{cor}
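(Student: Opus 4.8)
The plan is to obtain this statement as a straightforward specialization of Theorem 3.1, so the real work is to show that the three hypotheses of the corollary imply the two hypotheses of Theorem 3.1, and then to collapse the general formula. First I would extract the arithmetic consequences of the assumption $BC=0$. Since $(CB)^2=C(BC)B=0$, the element $CB$ is nilpotent; combined with $BC=0$ this gives $(BC)^d=(CB)^d=0$ and therefore $(BC)^{\pi}=(CB)^{\pi}=1$.

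With these identities in hand, the first hypothesis $BD=\lambda ABD^{\pi}$ is exactly $BD=\lambda (BC)^{\pi}ABD^{\pi}$, and the second hypothesis $CA=\lambda DCA^{\pi}$ is exactly $CA=\lambda (CB)^{\pi}DCA^{\pi}$. Thus the conditions of Theorem 3.1 hold verbatim, and I may invoke that theorem to conclude $M\in M_2(\mathcal{A})^d$ together with its general expression for $M^d$.

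It remains to simplify that expression. In the notation $P$, $Q$ of the proof of Theorem 3.1 one has $Q^2=\mathrm{diag}(BC,CB)$, whence $(Q^2)^d=0$ and $Q^d=Q(Q^2)^d=0$; in fact $Q^4=(Q^2)^2=0$, so $Q$ is nilpotent and $Q^{\pi}=1$. Feeding $(BC)^d=(CB)^d=0$ into the leading matrix of Theorem 3.1 kills its off-diagonal entries and turns the diagonal into $\mathrm{diag}(A^d,D^d)=P^d$. Every summand of the last three series in that formula carries either a factor $(Q^d)^{n+2}$ (with $n\ge 0$) or a trailing $Q^d$, each of which is $0$; and the surviving series $\sum_{n=0}^{\infty}(P^d)^{n+2}QM^nQ^{\pi}$ reduces to $\sum_{n=0}^{\infty}(P^d)^{n+2}QM^n$ because $Q^{\pi}=1$. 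Assembling these pieces gives the claimed formula.

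The only step that is not purely mechanical is the first one: recognizing that $BC=0$ forces $CB$ to square to zero, hence to be nilpotent, so that $(CB)^{\pi}=1$. This is precisely the observation that lets the apparently weaker hypotheses of the corollary match the spectral-idempotent hypotheses of Theorem 3.1; without it one could not discard the factor $(CB)^{\pi}$ in the second condition.
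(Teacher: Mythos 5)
Your proposal is correct and follows essentially the same route as the paper: both deduce from $BC=0$ that $(BC)^d=(CB)^d=0$, hence $(BC)^{\pi}=(CB)^{\pi}=I$, $Q^d=0$ and $Q^{\pi}=I$, and then obtain the formula by collapsing the expression in Theorem 3.1. Your writeup is in fact more explicit than the paper's (which merely asserts these identities), particularly in noting that $(CB)^2=C(BC)B=0$ forces $CB$ to be nilpotent.
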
\begin{proof} Since $BC=0$, we see that $(BC)^{\pi}=I=(CB)^{\pi}$. Moreover, we see that $$(BC)^d=0, B(CB)^d=B(CB)((BC)^d)^2=0, Q^d=0, Q^{\pi}=I.$$ This completes the proof by Theorem 3.1.\end{proof}

In a similar way as it was dong in Theorem 3.1, using the another splitting, we have

\begin{thm} Let $M=\left(
  \begin{array}{cc}
    A & B \\
    C & D
  \end{array}
\right)\in M_2(\mathcal{A})$, $A$ and $D$ have g-Drazin inverses. If $AB=\lambda A^{\pi}BD(CB)^{\pi}$ and $DC=\lambda D^{\pi}CA(BC)^2$, then $M\in M_2(\mathcal{A})^d$ and
$$\begin{array}{lll}
M^d&=&\left(
\begin{array}{cc}
(BC)^{\pi}A^d&B(CB)^dD^{\pi}\\
C(BC)^dA^{\pi}&(CB)^{\pi}D^d
\end{array}
\right)+\sum\limits_{n=0}^{\infty}(Q^d)^{n+2}PM^nP^{\pi}\\
&+&Q^{\pi}\sum\limits_{n=0}^{\infty}M^nQ(P^d)^{n+2}\\
&-&\sum\limits_{n=0}^{\infty}\sum\limits_{k=0}^{\infty}(Q^d)^{k+1}PM^{n+k}Q(P^d)^{n+2}\\
&-&\sum\limits_{n=0}^{\infty}(Q^d)^{n+2}PM^nQP^d.
\end{array}$$\end{thm}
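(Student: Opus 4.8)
The plan is to reuse the decomposition $M=P+Q$ from the proof of Theorem 3.1, with
$$P=\left(\begin{array}{cc}A&0\\0&D\end{array}\right),\qquad Q=\left(\begin{array}{cc}0&B\\C&0\end{array}\right),$$
but to apply Theorem 2.4 with the two summands interchanged. Here I would take $a=P$ and $b=Q$ (so that $a+b=M$ once more), whereas in Theorem 3.1 the choice was $a=Q$, $b=P$; this interchange is exactly what is meant by ``the another splitting,'' and it is responsible for the roles of $P^d$ and $Q^d$ being swapped throughout the asserted formula.

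First I would record the block expressions for $P^d$, $P^{\pi}$, $Q^2$, $(Q^2)^d$, and then $Q^d=Q(Q^2)^d$ and $Q^{\pi}$. These coincide with the ones already computed in Theorem 3.1, since the splitting itself is unchanged; in particular $P\in M_2(\mathcal{A})^d$ because $A,D\in\mathcal{A}^d$, and $Q\in M_2(\mathcal{A})^d$ by Cline's formula applied to $Q^2$. Thus both summands lie in $M_2(\mathcal{A})^d$, as required by Theorem 2.4.

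The decisive step is to show that the hypotheses are precisely the condition $PQ=\lambda P^{\pi}QPQ^{\pi}$ needed to invoke Theorem 2.4 for the pair $(a,b)=(P,Q)$. I would compute $PQ=\left(\begin{smallmatrix}0&AB\\DC&0\end{smallmatrix}\right)$ and $QP=\left(\begin{smallmatrix}0&BD\\CA&0\end{smallmatrix}\right)$, and then multiply out
$$P^{\pi}QPQ^{\pi}=\left(\begin{array}{cc}0&A^{\pi}BD(CB)^{\pi}\\D^{\pi}CA(BC)^{\pi}&0\end{array}\right).$$
Comparing the off-diagonal entries, the identity $PQ=\lambda P^{\pi}QPQ^{\pi}$ holds exactly when $AB=\lambda A^{\pi}BD(CB)^{\pi}$ and $DC=\lambda D^{\pi}CA(BC)^{\pi}$, which are the two hypotheses. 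Hence Theorem 2.4 yields $M=P+Q\in M_2(\mathcal{A})^d$.

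It then remains to substitute $a=P$, $b=Q$, $a+b=M$ into the formula of Theorem 2.4. The leading term $b^{\pi}a^d+b^da^{\pi}=Q^{\pi}P^d+Q^dP^{\pi}$ evaluates in blocks to the stated matrix $\left(\begin{smallmatrix}(BC)^{\pi}A^d&B(CB)^dD^{\pi}\\C(BC)^dA^{\pi}&(CB)^{\pi}D^d\end{smallmatrix}\right)$, and each of the four sums transcribes term-by-term after replacing $b^d,a^d,a,b,a+b$ by $Q^d,P^d,P,Q,M$, producing exactly the four series in the statement. Since Theorem 2.4 is already available, no new analytic input is needed; I expect the only real work to be the block bookkeeping in forming $P^{\pi}QPQ^{\pi}$ and in verifying the termwise correspondence of the series.
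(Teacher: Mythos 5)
Your proposal is correct and follows essentially the same route as the paper's own proof: the identical splitting $M=P+Q$, with Theorem 2.4 applied to the pair $(a,b)=(P,Q)$ (roles reversed relative to Theorem 3.1), justified by checking $PQ=\lambda P^{\pi}QPQ^{\pi}$ blockwise and then transcribing the formula. Note that, exactly as the paper's proof implicitly does, you read the hypothesis $DC=\lambda D^{\pi}CA(BC)^2$ as $DC=\lambda D^{\pi}CA(BC)^{\pi}$, which is clearly the intended condition (a typo in the statement).
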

\begin{proof} Construct $P$ and $Q$ as in Theorem 3.1, we have
$$\begin{array}{c}
PQ=\left(
\begin{array}{cc}
0&AB\\
DC&0
\end{array}
\right),\\
P^{\pi}QPQ^{\pi}=\left(
\begin{array}{cc}
0&A^{\pi}BD(CB)^{\pi}\\
D^{\pi}CA(BC)^{\pi}&0
\end{array}
\right).
\end{array}$$
By hypothesis, we see that $PQ=\lambda P^{\pi}QPQ^{\pi}$. This completes the proof by Theorem 2.4.\end{proof}

As a consequence of the above, we derive

\begin{cor} Let $M=\left(
  \begin{array}{cc}
    A & B \\
    C & D
  \end{array}
\right)\in M_2(\mathcal{A})$, $A$ and $D$ have g-Drazin inverses. If $AB=\lambda A^{\pi}BD, DC=0$ and $BC=0$, then $M\in M_2(\mathcal{A})^d$ and
$$\begin{array}{lll}
M^d&=&\left(
\begin{array}{cc}
A^d&0\\
0&D^d
\end{array}
\right)+\sum\limits_{n=0}^{\infty}M^nQ(P^d)^{n+2}.
\end{array}$$\end{cor}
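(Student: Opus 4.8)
The plan is to read the statement as the degenerate case $Q^d=0$ of Theorem 3.3 and to watch the five-term formula there collapse onto its two surviving summands. I would keep the same splitting $M=P+Q$ used in the proof of Theorem 3.3, with $P=\left(\begin{smallmatrix}A&0\\0&D\end{smallmatrix}\right)$ and $Q=\left(\begin{smallmatrix}0&B\\C&0\end{smallmatrix}\right)$, and then exploit $BC=0$ to trivialize everything built out of $Q$.

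First I would record the consequences of $BC=0$. Since $(BC)^d=0$, Cline's formula (\cite[Theorem 2.1]{L}) gives $(CB)^d=C\big((BC)^d\big)^2B=0$ as well, so that $(BC)^{\pi}=(CB)^{\pi}=1$. The same identity yields $B(CB)^d=BC\big((BC)^d\big)^2B=0$ and $C(BC)^d=0$, whence $Q^d=Q(Q^2)^d=\left(\begin{smallmatrix}0&B(CB)^d\\C(BC)^d&0\end{smallmatrix}\right)=0$ and therefore $Q^{\pi}=1$. This is the technical heart of the argument: once $Q^d=0$ is in hand, the off-diagonal block behaves like a nilpotent perturbation and all spectral-idempotent corrections from $Q$ disappear.

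Next I would verify that the three hypotheses genuinely place us inside Theorem 3.3, i.e. that $PQ=\lambda P^{\pi}QPQ^{\pi}$. The $(1,2)$ block of this identity reads $AB=\lambda A^{\pi}BD(CB)^{\pi}$, which, because $(CB)^{\pi}=1$, is precisely the given relation $AB=\lambda A^{\pi}BD$. The $(2,1)$ block reads $DC=\lambda D^{\pi}CA(BC)^{\pi}$; here I would use $(BC)^{\pi}=1$ together with the consequence $D^{\pi}C=C$ of the hypothesis $DC=0$ (which follows from $D^dC=(D^d)^2DC=0$, hence $DD^dC=0$) in order to reconcile the left-hand side $DC=0$ with the right-hand side $\lambda D^{\pi}CA$. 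Checking this $(2,1)$ block is the step I expect to be the main obstacle, since it is the only place where the hypothesis $DC=0$ must be made to interact correctly with the surviving term $\lambda D^{\pi}CA$; it is worth isolating as a short lemma and stating explicitly how $DC=0$ forces the required cancellation.

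Finally, with $Q^d=0$ and $Q^{\pi}=1$ established, I would substitute into the formula of Theorem 3.3 (applied with $a=P$, $b=Q$). Every summand carrying a positive power of $Q^d$, namely $\sum_{n}(Q^d)^{n+2}PM^nP^{\pi}$, the double sum, and $\sum_{n}(Q^d)^{n+2}PM^nQP^d$, vanishes identically; the leading block $\left(\begin{smallmatrix}(BC)^{\pi}A^d&B(CB)^dD^{\pi}\\C(BC)^dA^{\pi}&(CB)^{\pi}D^d\end{smallmatrix}\right)$ collapses to $\mathrm{diag}(A^d,D^d)$; and the one surviving term $Q^{\pi}\sum_{n=0}^{\infty}M^nQ(P^d)^{n+2}$ becomes $\sum_{n=0}^{\infty}M^nQ(P^d)^{n+2}$. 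This reproduces the asserted expression for $M^d$ and completes the proof.
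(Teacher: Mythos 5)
Your reduction of the first hypothesis of Theorem 3.3 is fine: $BC=0$ gives $Q^3=0$, hence $Q^d=0$, $Q^{\pi}=I$ and $(BC)^{\pi}=(CB)^{\pi}=I$, so $AB=\lambda A^{\pi}BD(CB)^{\pi}$ is exactly the stated $AB=\lambda A^{\pi}BD$. The gap is the $(2,1)$ block, precisely the step you flagged and postponed to a ``short lemma'': that lemma is false. The identity you must verify is $DC=\lambda D^{\pi}CA(BC)^{\pi}$, i.e. $DC=\lambda D^{\pi}CA$. From $DC=0$ you correctly deduce $D^dC=0$, hence $D^{\pi}C=C$, but that turns the right-hand side into $\lambda CA$, and nothing in the hypotheses makes $CA$ vanish. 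Concretely, take $A=1$, $B=0$, $D=0$ and $C\neq 0$: then $AB=\lambda A^{\pi}BD$ (both sides are $0$), $DC=0$ and $BC=0$, while $\lambda D^{\pi}CA(BC)^{\pi}=\lambda C\neq 0=DC$. So $PQ\neq\lambda P^{\pi}QPQ^{\pi}$, and Theorem 3.3 (equivalently, Theorem 2.4 with $a=P$, $b=Q$) is simply not applicable under the corollary's hypotheses. Indeed, the honest specialization of Theorem 3.3 at $BC=0$ is the hypothesis pair $AB=\lambda A^{\pi}BD$, $DC=\lambda D^{\pi}CA$, which is Theorem 4.3 of the paper, not this corollary. (To be fair, the paper offers no proof at all here---the corollary is announced as an immediate consequence of Theorem 3.3---so the same objection applies to the paper's implicit derivation; you have not missed a trick that the authors used.)

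The statement itself is nevertheless true (in the example above $M$ is idempotent, and the asserted formula does return $M^d=M$), but proving it requires an argument that genuinely exploits $DC=0$ rather than $DC=\lambda D^{\pi}CA$. One correct route: split $M=T+N$ with $T=\left(\begin{smallmatrix}A&B\\0&D\end{smallmatrix}\right)$ and $N=\left(\begin{smallmatrix}0&0\\C&0\end{smallmatrix}\right)$. Then $N^2=0$ and $TN=\left(\begin{smallmatrix}BC&0\\DC&0\end{smallmatrix}\right)=0$, so the classical additive result for $ab=0$ (Djordjevi\'c--Wei, cited as \cite{DW2} in the paper) yields $M^d=T^d+N(T^d)^2$. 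The hypothesis $AB=\lambda A^{\pi}BD$ forces $A^dB=0$ and $A^{\pi}B=B$, so Lemma 2.1 collapses to $T^d=\left(\begin{smallmatrix}A^d&w\\0&D^d\end{smallmatrix}\right)$ with $w=\sum_{n\geq 0}A^nB(D^d)^{n+2}$, whence $M^d=\left(\begin{smallmatrix}A^d&w\\C(A^d)^2&D^d+CwD^d\end{smallmatrix}\right)$. On the other hand, $BC=DC=0$ give $M^nQ=\left(\begin{smallmatrix}0&A^nB\\0&CA^{n-1}B\end{smallmatrix}\right)$ for $n\geq 1$, and summing $M^nQ(P^d)^{n+2}$ reproduces exactly the same matrix, i.e. the asserted formula $P^d+\sum_{n\geq 0}M^nQ(P^d)^{n+2}$. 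So the conclusion stands, but only by a decomposition different from the one you (and the paper) propose; as written, your proof does not go through.
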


\section{Certain simpler expressions}

Let $M=\left(
  \begin{array}{cc}
    A & B \\
    C & D
  \end{array}
\right)\in M_2(\mathcal{A})$. The aim of this section is to present certain simpler representations of the g-Drazin inverse of the block matrix $M$ in the case
$BC=0$ or $CB=0$. We now come to the main result of this section.

\begin{thm} Let $A$ and $D$ have g-Drazin inverses. If $BD=\lambda A^{\pi}AB, DC=\lambda^{-1}D^{\pi}CAA^{\pi}$ and $BC=0$, then $M\in M_2(\mathcal{A})^d$ and
$$M^d=\left(
\begin{array}{cc}
A^d&(A^d)^2B+\sum\limits_{n=0}^{\infty}A^nB(D^d)^{n+2}\\
C(A^d)^2&D^d+C(A^d)^3B+\sum\limits_{n=0}^{\infty}\sum\limits_{k=0}^{\infty}D^{k-1}CA^{n-k}B(D^d)^{n+2}
\end{array}
\right).$$\end{thm}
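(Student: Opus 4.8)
The plan is to realize $M$ as a sum $P+Q$ and apply Theorem 2.4, exactly as in Section 3, but with the splitting chosen so that the triangular structure forced by $BC=0$ collapses the general formula into the stated closed form. First I would set
$$
P=\left(\begin{array}{cc} A&0\\ 0&D\end{array}\right),\qquad
Q=\left(\begin{array}{cc} 0&B\\ C&0\end{array}\right),
$$
and record the same computations as in Theorem 3.1, namely $Q^2=\mathrm{diag}(BC,CB)$, $P^d=\mathrm{diag}(A^d,D^d)$, $P^\pi=\mathrm{diag}(A^\pi,D^\pi)$, and $PQ,QP$ in off-diagonal form. The key simplification is that $BC=0$ gives $Q^\pi=I$ and, via Cline's formula, $Q^d=0$ (so $B(CB)^d=0$ and $C(BC)^d=0$ as in Corollary 3.2). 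The hypotheses $BD=\lambda A^\pi AB$ and $DC=\lambda^{-1}D^\pi CAA^\pi$ should be massaged into the single matrix identity $QP=\lambda Q^\pi PQP^\pi$ required by Theorem 2.4; since $Q^\pi=I$ this reduces to checking $QP=\lambda PQP^\pi$ blockwise, i.e.\ verifying $BD=\lambda ABD^\pi$ and $CA=\lambda DCA^\pi$ type relations are compatible with the given ones after accounting for the $A^\pi,D^\pi$ placement and the scalar $\lambda^{\pm1}$.

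Once Theorem 2.4 applies, $M=Q+P\in M_2(\mathcal{A})^d$ and its $M^d$ is given by the five-term formula there, with the roles of $a,b$ played by $Q,P$. The heart of the argument is then a bookkeeping reduction: because $Q^d=0$ and $Q^\pi=I$, four of the five terms in the Theorem 2.4 expression vanish or collapse, leaving essentially $P^d + Q^\pi\sum_{n\ge0}M^nQ(P^d)^{n+2}$ together with the spectral-idempotent correction term $Q^\pi P^d + P^\pi Q^d$-style contribution. I would expand $M^n$ using $BC=0$, which makes the powers of $M$ block-triangular in a controlled way, and collect entries. The $(1,1)$ entry should reduce to $A^d$, the $(2,1)$ entry to $C(A^d)^2$, and the $(1,2)$ and $(2,2)$ entries produce the single and double series $\sum_n A^nB(D^d)^{n+2}$ and $\sum_n\sum_k D^{k-1}CA^{n-k}B(D^d)^{n+2}$ respectively, after reindexing.

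The main obstacle I anticipate is the entrywise extraction of $M^nQ(P^d)^{n+2}$ and its summation into the claimed double series. Writing $M^n$ blockwise under the constraint $BC=0$ requires identifying which mixed products $D^{k-1}CA^{n-k}B$ survive; the appearance of the $D^{k-1}$ factor (with a $k-1$ exponent) and the finite inner range suggest a careful induction on $n$ for the block entries of $M^n$, tracking how $C$ migrates left past powers of $A$ while $B$ migrates right toward powers of $D^d$. Verifying the $C(A^d)^3B$ cross term in the $(2,2)$ slot, which comes from the interaction of the off-diagonal $Q$ with $(P^d)^{n+2}$ at low $n$, is the delicate point where the indices must be aligned precisely. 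Everything else is the routine algebra already licensed by Lemma 2.1 and Theorem 2.4.
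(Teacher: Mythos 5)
Your proposal breaks down at the step you describe as ``massaging'' the hypotheses into the identity required by Theorem 2.4; that gap cannot be closed. With your splitting $P=\left(\begin{array}{cc}A&0\\0&D\end{array}\right)$, $Q=\left(\begin{array}{cc}0&B\\C&0\end{array}\right)$, the relation $BC=0$ does give $Q^3=0$, $Q^d=0$, $Q^{\pi}=I$, so Theorem 2.4 would need either $QP=\lambda PQP^{\pi}$ (blockwise: $BD=\lambda ABD^{\pi}$ and $CA=\lambda DCA^{\pi}$) or $PQ=\lambda P^{\pi}QP$ (blockwise: $AB=\lambda A^{\pi}BD$ and $DC=\lambda D^{\pi}CA$, which are exactly the hypotheses of Theorem 4.3, not of this theorem). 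Neither follows from $BD=\lambda A^{\pi}AB$, $DC=\lambda^{-1}D^{\pi}CAA^{\pi}$, $BC=0$: in the present hypotheses the spectral idempotent is attached to $A$ on the left of $B$, not to $D$ on the right. Concretely, in $\mathcal{A}=M_2({\Bbb C})$ take
$$A=B=\left(\begin{array}{cc}1&0\\0&0\end{array}\right),\quad C=\left(\begin{array}{cc}0&0\\0&1\end{array}\right),\quad D=0.$$
Then $AA^{\pi}=0$, so all three hypotheses hold (each side is $0$), yet $BD=0\neq\lambda ABD^{\pi}$ and $AB\neq 0=\lambda A^{\pi}BD$, so Theorem 2.4 applies to your $P,Q$ in neither order. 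Moreover, even where your reduction is legitimate it produces $M^d=P^d+\sum_{n=0}^{\infty}M^nQ(P^d)^{n+2}$ (that is the statement of Theorem 4.3): every entry of this series ends in $(A^d)^{n+2}$ or $(D^d)^{n+2}$ on the right, so the blocks $(A^d)^2B$ and $C(A^d)^3B$ of the asserted formula, which end in $B$, can never arise. Indeed, for $A=I$, $C=0$, $D=0$, $B\neq 0$ the matrix $M$ is idempotent and $M^d=M$ has $(1,2)$ block $B$, whereas your reduced expression gives $(1,2)$ block $0$.

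The missing idea is an \emph{asymmetric} splitting that separates the quasinilpotent and core parts of $A$. The paper takes
$$P=\left(\begin{array}{cc}AA^{\pi}&0\\0&D\end{array}\right),\qquad Q=\left(\begin{array}{cc}A^2A^d&B\\C&0\end{array}\right),$$
so that $P^d=\left(\begin{array}{cc}0&0\\0&D^d\end{array}\right)$ and, using $BC=0$, $Q^d=\left(\begin{array}{cc}A^d&(A^d)^2B\\C(A^d)^2&C(A^d)^3B\end{array}\right)$ is \emph{not} zero; this $Q^d$ is precisely the source of the four non-series blocks in the stated $M^d$. The hypotheses are tailored to this splitting: $PQ$ has off-diagonal entries $AA^{\pi}B=\lambda^{-1}BD$ and $DC=\lambda^{-1}D^{\pi}CAA^{\pi}$, and (using $BDC=0$, which follows from $BD=\lambda A^{\pi}AB$ and $BC=0$) one verifies $PQ=\lambda^{-1}P^{\pi}QPQ^{\pi}$, so Theorem 2.4 applies with scalar $\lambda^{-1}$. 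Then $Q^dP=0$ (since $A^dAA^{\pi}=0$ and $A^dBD=0$) annihilates every term containing $Q^dP$, leaving $M^d=P^d+Q^d+Q^{\pi}\sum_{n=0}^{\infty}M^nQ(P^d)^{n+2}$, and this last series is evaluated using $BD^nC=0$ to give the double sum in the statement. Without switching to this splitting, your outline cannot be completed.
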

\begin{proof} Write $M=P+Q$, where $$P=\left(
  \begin{array}{cc}
    AA^{\pi} & 0 \\
    0 & D
  \end{array}
\right), Q=\left(
  \begin{array}{cc}
    A^2A^d & B \\
    C & 0
  \end{array}
\right).$$ Then $$P^d=\left(
  \begin{array}{cc}
    0 & 0 \\
    0 & D^d
  \end{array}
\right), P^{\pi}=\left(
  \begin{array}{cc}
   I & 0 \\
    0 & D^{\pi}
  \end{array}
\right).$$ Since $BC=0$, we have
$$Q^d=\left(
  \begin{array}{cc}
    A^d & (A^d)^2B \\
    C(A^d)^2 & C(A^d)^3B
  \end{array}
\right), Q^{\pi}=\left(
  \begin{array}{cc}
    A^{\pi} & -A^dB \\
    -CA^d & I-C(A^d)^2B
  \end{array}
\right).$$ Since $BD=\lambda A^{\pi}AB$ and $BC=0$, we see that $BDC=(\lambda A^{\pi}AB)C=0$. As $DC=\lambda^{-1} D^{\pi}CAA^{\pi}$, we have
$$PQ=\left(
  \begin{array}{cc}
   0 & A^{\pi}AB \\
    DC & 0
  \end{array}
\right)=\lambda^{-1}\left(
  \begin{array}{cc}
  0 & BD \\
    D^{\pi}CAA^{\pi} & 0
  \end{array}
\right)=\lambda^{-1}P^{\pi}QPQ^{\pi}.$$ In view of Theorem 2.4, we have
$$\begin{array}{lll}
M^d&=&Q^{\pi}P^d+Q^dP^{\pi}+\sum\limits_{n=0}^{\infty}(Q^d)^{n+2}PM^nP^{\pi}\\
&+&Q^{\pi}\sum\limits_{n=0}^{\infty}M^nQ(P^d)^{n+2}\\
&-&\sum\limits_{n=0}^{\infty}\sum\limits_{k=0}^{\infty}(Q^d)^{k+1}PM^{n+k}Q(P^d)^{n+2}\\
&-&\sum\limits_{n=0}^{\infty}(Q^d)^{n+2}PM^nQP^d.
\end{array}$$
Since $BD=\lambda A^{\pi}AB$, we see that $A^dBD=0$, and then $$Q^dP=\left(
  \begin{array}{cc}
    A^d & (A^d)^2B \\
    C(A^d)^2 & C(A^d)^3B
  \end{array}
\right)\left(
  \begin{array}{cc}
    AA^{\pi} & 0 \\
    0 & D
  \end{array}
\right)=0.$$ Therefore $$M^d=P^d+Q^d+Q^{\pi}\sum\limits_{n=0}^{\infty}M^nQ(P^d)^{n+2}.$$ Moreover, we have $BD^nC=0$ for any $n\in {\Bbb N}$. Therefore
$$Q^{\pi}\sum\limits_{n=0}^{\infty}M^nQ(P^d)^{n+2}=\left(
  \begin{array}{cc}
   0 & \sum\limits_{n=1}^{\infty}A^nB(D^d)^{n+2} \\
   0 & \sum\limits_{n=1}^{\infty}\sum\limits_{i=1}^nD^{i-1}CA^{n-i}BD^{n+2}
  \end{array}
\right),$$ as desired.\end{proof}

\begin{cor} Let $A$ and $D$ have g-Drazin inverses. If $CA=\lambda D^{\pi}DC, AB=\lambda A^{\pi}BDD^{\pi}$ and $CB=0$, then $M\in M_2(\mathcal{A})^d$ and
$$M^d=\left(
\begin{array}{cc}
A^d+B(D^d)^3C+\sum\limits_{n=0}^{\infty}\sum\limits_{k=0}^{\infty}A^{k-1}BD^{n-k}C(A^d)^{n+2}&B(D^d)^2\\
(D^d)^2C+\sum\limits_{n=0}^{\infty}D^nC(A^d)^{n+2}&D^d
\end{array}
\right).$$\end{cor}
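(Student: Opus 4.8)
The plan is to obtain this corollary from Theorem~4.1 by a row--column swap rather than by repeating the block computation. Let $E=\left(\begin{smallmatrix}0&I\\I&0\end{smallmatrix}\right)\in M_2(\mathcal{A})$, so that $E=E^{-1}$, and set $\widetilde{M}=EME=\left(\begin{smallmatrix}D&C\\B&A\end{smallmatrix}\right)$. The g-Drazin inverse is invariant under similarity: if $s\in\mathcal{A}^{-1}$ and $x\in\mathcal{A}^{d}$, then $sxs^{-1}\in\mathcal{A}^{d}$ with $(sxs^{-1})^{d}=sx^{d}s^{-1}$, since conjugation preserves commutation with $x$, the identity $b=bab$, and the quasinilpotency of $x-x^{2}x^{d}$ (the spectrum being unchanged). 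Hence it suffices to show $\widetilde{M}\in M_2(\mathcal{A})^{d}$ and then read off $M^{d}=E\widetilde{M}^{d}E$.

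Next I would check that $\widetilde{M}$ meets the hypotheses of Theorem~4.1 under the identification $\widetilde{A}=D,\ \widetilde{B}=C,\ \widetilde{C}=B,\ \widetilde{D}=A$. The three requirements $\widetilde{B}\widetilde{D}=\lambda\,\widetilde{A}^{\pi}\widetilde{A}\widetilde{B}$, $\widetilde{D}\widetilde{C}=\lambda^{-1}\widetilde{D}^{\pi}\widetilde{C}\widetilde{A}\widetilde{A}^{\pi}$, and $\widetilde{B}\widetilde{C}=0$ become $CA=\lambda D^{\pi}DC$, $AB=\lambda^{-1}A^{\pi}BDD^{\pi}$, and $CB=0$, which are exactly the standing hypotheses; the only subtlety is that the scalar attached to the $AB$ relation appears as $\lambda^{-1}$, so the exponent of $\lambda$ must be tracked carefully when matching the two conditions. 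Granting this, Theorem~4.1 applies to $\widetilde{M}$ and supplies $\widetilde{M}^{d}$ explicitly.

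Finally I would transport the formula back. Writing $\widetilde{M}^{d}=\left(\begin{smallmatrix}p&q\\r&s\end{smallmatrix}\right)$, conjugation by $E$ sends this to $\left(\begin{smallmatrix}s&r\\q&p\end{smallmatrix}\right)$, so $M^{d}$ is obtained from $\widetilde{M}^{d}$ by interchanging the two diagonal blocks and the two off-diagonal blocks. Substituting $\widetilde{A}=D,\ \widetilde{D}=A,\ \widetilde{B}=C,\ \widetilde{C}=B$ into the entries of $\widetilde{M}^{d}$ and performing this swap yields precisely the stated matrix for $M^{d}$. The main obstacle is purely bookkeeping: one must verify that the block permutation carries each summation term of $\widetilde{M}^{d}$ to the correspondingly indexed term of the target, for instance that $\sum_{n}\sum_{k}\widetilde{D}^{k-1}\widetilde{C}\widetilde{A}^{n-k}\widetilde{B}(\widetilde{D}^{d})^{n+2}$ becomes $\sum_{n}\sum_{k}A^{k-1}BD^{n-k}C(A^{d})^{n+2}$, and that the scalar indeed enters as $\lambda^{-1}$. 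If the flip is felt to be too indirect, the same result follows by imitating the proof of Theorem~4.1 with the symmetric splitting $P=\left(\begin{smallmatrix}A&0\\0&DD^{\pi}\end{smallmatrix}\right)$ and $Q=\left(\begin{smallmatrix}0&B\\C&D^{2}D^{d}\end{smallmatrix}\right)$ and appealing to Theorem~2.4; the conjugation argument merely repackages that computation.
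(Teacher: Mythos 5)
Your proposal is essentially identical to the paper's own proof: the paper likewise conjugates by the permutation matrix $\left(\begin{smallmatrix}0&I\\I&0\end{smallmatrix}\right)$, applies Theorem 4.1 to $\left(\begin{smallmatrix}D&C\\B&A\end{smallmatrix}\right)$, and transports the resulting formula back. The $\lambda$-versus-$\lambda^{-1}$ subtlety you flag is real, but it is a defect of the paper's statement rather than of your argument: a literal application of Theorem 4.1 to the flipped matrix requires $AB=\lambda^{-1}A^{\pi}BDD^{\pi}$, so the corollary as printed matches its proof only after replacing $\lambda$ by $\lambda^{-1}$ in one of the two scalar conditions (the paper silently ignores this).
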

\begin{proof} Obviously, $$\left(
\begin{array}{cc}
A&B\\
C&D
\end{array}
\right)=\left(
\begin{array}{cc}
0&I\\
I&0
\end{array}
\right)\left(
\begin{array}{cc}
D&C\\
B&A
\end{array}
\right)\left(
\begin{array}{cc}
0&I\\
I&0
\end{array}
\right).$$ Applying Theorem 4.1 to $\left(
\begin{array}{cc}
D&C\\
B&A
\end{array}
\right)$, we see that it has g-Drazin inverse and $$\begin{array}{ll}
&\left(
\begin{array}{cc}
D&C\\
B&A
\end{array}
\right)^d\\
=&\left(\begin{array}{cc}
D^d&(D^d)^2C+\sum\limits_{n=0}^{\infty}D^nC(A^d)^{n+2}\\
B(D^d)^2&A^d+B(D^d)^3C+\sum\limits_{n=0}^{\infty}\sum\limits_{k=0}^{\infty}A^{k-1}BD^{n-k}C(A^d)^{n+2}
\end{array}
\right).
\end{array}$$  Therefore $$\begin{array}{lll}
M^d&=&\left(
\begin{array}{cc}
0&I\\
I&0
\end{array}
\right)\left(
\begin{array}{cc}
D&C\\
B&A
\end{array}
\right)^d\left(
\begin{array}{cc}
0&I\\
I&0
\end{array}
\right),
\end{array}$$ as desired.\end{proof}

Now we are ready to prove the other main theorem in this section.

\begin{thm} Let $A$ and $D$ have g-Drazin inverses. If $AB=\lambda A^{\pi}BD, DC=\lambda D^{\pi}CA$ and $BC=0$, then $M\in M_2(\mathcal{A})^d$ and
$$M^d=\left(
\begin{array}{cc}
A^d&0\\
0&D^d
\end{array}
\right)+\sum\limits_{n=0}^{\infty}M^n\left(
\begin{array}{cc}
0&B(D^d)^{n+2}\\
C(A^d)^{n+2}&0
\end{array}
\right).$$\end{thm}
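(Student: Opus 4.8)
The plan is to reuse the additive splitting already employed in Theorems 3.1 and 3.3, namely $M=P+Q$ with $P=\mathrm{diag}(A,D)$ and $Q=\left(\begin{smallmatrix}0&B\\C&0\end{smallmatrix}\right)$, and to recognize the present hypotheses as the special case of Theorem 3.3 in which the spectral idempotents $(BC)^{\pi}$ and $(CB)^{\pi}$ are trivial. Thus the task reduces to extracting the consequences of $BC=0$ and then collapsing the long formula of Theorem 3.3.

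First I would record what $BC=0$ gives. Since $BC=0\in\mathcal{A}^{qnil}$ we have $(BC)^d=0$, and Cline's formula (\cite[Theorem 2.1]{L}) yields $(CB)^d=C((BC)^d)^2B=0$, so that $CB\in\mathcal{A}^{qnil}$ as well. Consequently $(BC)^{\pi}=1-BC(BC)^d=1$ and $(CB)^{\pi}=1-CB(CB)^d=1$. Recalling from the proof of Theorem 3.1 that $Q^{\pi}=\mathrm{diag}\big((BC)^{\pi},(CB)^{\pi}\big)$ and $Q^d=\left(\begin{smallmatrix}0&B(CB)^d\\C(BC)^d&0\end{smallmatrix}\right)$, these identities give the two facts that drive the whole argument: $Q^{\pi}=I$ and $Q^d=0$.

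Next I would verify that the hypotheses are exactly those of Theorem 3.3. Because $(CB)^{\pi}=1$ and $(BC)^{\pi}=1$, the assumed equalities $AB=\lambda A^{\pi}BD$ and $DC=\lambda D^{\pi}CA$ coincide with $AB=\lambda A^{\pi}BD(CB)^{\pi}$ and $DC=\lambda D^{\pi}CA(BC)^{\pi}$, that is, with $PQ=\lambda P^{\pi}QPQ^{\pi}$. As $A,D\in\mathcal{A}^d$, Theorem 3.3 applies and already delivers $M\in M_2(\mathcal{A})^d$ together with its representation of $M^d$; it then remains only to specialize that representation.

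Finally I would substitute $Q^d=0$ and $Q^{\pi}=I$ into the formula of Theorem 3.3. The three sums carrying a factor $(Q^d)^{k+1}$ or $(Q^d)^{n+2}$ vanish identically; the leading block $\left(\begin{smallmatrix}(BC)^{\pi}A^d&B(CB)^dD^{\pi}\\C(BC)^dA^{\pi}&(CB)^{\pi}D^d\end{smallmatrix}\right)$ collapses to $\mathrm{diag}(A^d,D^d)$ because $(BC)^d=(CB)^d=0$ and $(BC)^{\pi}=(CB)^{\pi}=1$; and $Q^{\pi}=I$ leaves the single surviving term $\sum_{n=0}^{\infty}M^nQ(P^d)^{n+2}$. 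Since $Q(P^d)^{n+2}=\left(\begin{smallmatrix}0&B(D^d)^{n+2}\\C(A^d)^{n+2}&0\end{smallmatrix}\right)$, this is precisely the claimed expression for $M^d$. The only point with any real content is the passage from $BC=0$ to $CB\in\mathcal{A}^{qnil}$ (equivalently $Q^d=0$), since that is what trivializes Theorem 3.3's formula; via Cline's formula it is immediate, so I anticipate the remaining difficulty to be purely the bookkeeping of checking that every $(Q^d)$-term drops and that the product orders in the surviving sum match.
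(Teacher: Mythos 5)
Your proposal is correct and takes essentially the same route as the paper: the paper uses the identical splitting $M=P+Q$, observes $Q^3=0$ (hence $Q^d=0$, $Q^{\pi}=I$), verifies $PQ=\lambda P^{\pi}QPQ^{\pi}$, and invokes Theorem 2.4 directly --- which is exactly what your detour through Theorem 3.3 amounts to, since Theorem 3.3 is nothing but Theorem 2.4 applied to this very splitting. The only point to flag is that the printed hypothesis of Theorem 3.3, $DC=\lambda D^{\pi}CA(BC)^2$, is an evident misprint for $DC=\lambda D^{\pi}CA(BC)^{\pi}$ (the latter is what its proof actually verifies); your identification of the present hypotheses with those of Theorem 3.3 tacitly uses this corrected form, which is the right reading.
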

\begin{proof} Write $M=P+Q$, where $$P=\left(
  \begin{array}{cc}
    A & 0 \\
    0 & D
  \end{array}
\right), Q=\left(
  \begin{array}{cc}
   0 & B \\
    C & 0
  \end{array}
\right).$$ Then $$P^d=\left(
  \begin{array}{cc}
    A^d & 0 \\
    0 & D^d
  \end{array}
\right), P^{\pi}=\left(
  \begin{array}{cc}
   A^{\pi} & 0 \\
    0 & D^{\pi}
  \end{array}
\right).$$ As $BC=0$, we see that $Q^3=0$, and so $Q^d=0, Q^{\pi}=I$. We easily check that
$$PQ=\left(
  \begin{array}{cc}
   0 & AB \\
    DC & 0
  \end{array}
\right)=\lambda\left(
  \begin{array}{cc}
  0 & A^{\pi}BD \\
    D^{\pi}CA & 0
  \end{array}
\right)=\lambda P^{\pi}QPQ^{\pi}.$$ Since $Q^d=0$, it follows by Theorem 2.4 that
$$M^d=P^d+\sum\limits_{n=0}^{\infty}M^nQ(P^d)^{n+2}.$$ Moreover, we have
$$\sum\limits_{n=0}^{\infty}M^nQ(P^d)^{n+2}=\sum\limits_{n=1}^{\infty}M^n\left(
  \begin{array}{cc}
   0 & B(D^d)^{n+2} \\
   C(A^d)^{n+2}&0
  \end{array}
\right),$$ as required.\end{proof}

\begin{exam} Let $M=\left(
  \begin{array}{cc}
    A & B \\
    C & D
  \end{array}
\right)\in M_8({\Bbb C})$, where $$A=D=\left(
  \begin{array}{cccc}
   0&1&0&0\\
   0&0&1&0\\
   0&0&0&1\\
   0&0&0&0
  \end{array}
\right), B=C=\left(
  \begin{array}{cccc}
   0&0&1&0\\
   0&0&0&3\\
   0&0&0&0\\
   0&0&0&0
  \end{array}
\right)\in M_4({\Bbb C}).$$ Then $$AB=3A^{\pi}BD, DC=3D^{\pi}CA~\mbox{and}~BC=0.$$\end{exam}
\begin{proof} As $A, B, C, D$ are nilpotent, so $A^{\pi}=B^{\pi}=C^{\pi}=D^{\pi}=I_4$. It is clear by computing that $$AB=3A^{\pi}BD, DC=3D^{\pi}CA~\mbox{and}~BC=0.$$ In this case, $AB\neq A^{\pi}BD.$\end{proof}

\vskip10mm

\end{document}